\documentclass[12pt]{amsart}
\usepackage{amsmath,amsfonts,amssymb,amsrefs,fullpage,esint} 

\usepackage{amsthm}
\usepackage{tabularx}
\usepackage{tikz}
\usepackage{enumerate}
\newtheorem{thm}{Theorem}
\newtheorem{prop}[thm]{Proposition}

\newtheorem{remark}[thm]{Remark}
\DeclareFontFamily{U}{mathx}{\hyphenchar\font45}
\DeclareFontShape{U}{mathx}{m}{n}{
      <5> <6> <7> <8> <9> <10>
      <10.95> <12> <14.4> <17.28> <20.74> <24.88>
      mathx10
      }{}
\DeclareSymbolFont{mathx}{U}{mathx}{m}{n}
\DeclareFontSubstitution{U}{mathx}{m}{n}
\DeclareMathAccent{\widecheck}{0}{mathx}{"71}
\DeclareMathAccent{\wideparen}{0}{mathx}{"75}
\AtBeginDocument{%
   \def\MR#1{}
}

\numberwithin{equation}{section}
\numberwithin{thm}{section}
\newtheorem*{acknowledgement}{Acknowledgements}

\newcommand{\R}{\mathbb{R}}
\newcommand{\T}{\mathbb{T}}

\newcommand{\Z}{\mathbb{Z}}

\renewcommand{\S}{\mathcal{S}}
\newcommand{\supp}{\operatorname{supp}}
\renewcommand{\angle}{\operatorname{Angle}}

\newcommand{\eps}{\varepsilon}

\newcommand{\ka}{\kappa}

\newcommand{\La}{\Lambda}

\newcommand{\Om}{\Omega}


\newcommand{\ZR}{\mathbb{R}}
\newcommand{\ZZ}{\mathbb{Z}}

\newcommand{\ZT}{\mathbb{T}}

\newcommand{\hichi}{\raisebox{0.7ex}{\(\chi\)}}

\title{$L^p$ weighted Fourier restriction estimates}
\author{Xiumin Du}
\author{Jianhui Li}
\author{Hong Wang}
\author{Ruixiang Zhang}
\begin{document}

\begin{abstract}
We obtain some sharp $L^p$ weighted Fourier restriction estimates of the form $\|Ef\|_{L^p(B^{n+1}(0,R),Hdx)} \lessapprox R^{\beta}\|f\|_2$, where $E$ is the Fourier extension operator over the truncated paraboloid, and $H$ is a weight function on $\mathbb R^{n+1}$ which is $n$-dimensional up to scale $\sqrt R$.
\end{abstract}

\maketitle

\section{Introduction}

Consider the Fourier extension operator over the truncated paraboloid in $\R^{n+1}$:
$$
Ef(x) =\int_{B^n(0,1)} e^{i(x'\cdot \xi + x_{n+1}|\xi|^2)} f(\xi) d\xi, \quad x = (x',x_{n+1}) \in \R^n \times \R.
$$
We say that a weight function $H:\mathbb R^{n+1} \to [0,1]$ is \emph{$\alpha$-dimensional up to scale $S$} if there is a constant $C_H$ such that
$$
\int_{B(x,r)} H(x)dx\leq C_{H} r^{\alpha}, \quad \forall x\in\mathbb R^{n+1}, \forall 1\leq r\leq S. 
$$

Weighted Fourier restriction estimates of the following form have been studied extensively (see for example \cites{DGOWWZ21, DZ19}):
\begin{equation} \label{eq:WEE-0}
   \|Ef \|_{L^{p}(B^{n+1}(0,R),Hdx)} \lesssim R^{\eta} \|f\|_2, 
\end{equation}
where $H$ is any weight function on $\mathbb R^{n+1}$ which is $\alpha$-dimensional up to scale $R$, and the implicit constant depends on $C_H$. Such estimates have several applications in PDEs and geometric measure theory, including the size of the divergence set of Schr\"odinger solutions, spherical average Fourier decay rates of fractal measures, and Falconer's distance set problem. 

It is a challenging problem to determine sharp estimates of the form \eqref{eq:WEE-0} for general $p$ and $\alpha$. As a first step towards this problem, in the current paper, we consider a special case of \eqref{eq:WEE-0} where $Ef$ is essentially supported in a $\sqrt{R}$-neighborhood of an $m$-dimensional subspace with $m<n+1$. We remark that the relevant best-known examples against estimates of the form \eqref{eq:WEE-0} in \cites{DKWZ20, du2019upper} are all in this special case. Our argument works for general $\alpha$, but to be explicit, we only focus on the case $\alpha=n$. By locally constant property, the estimate \eqref{eq:WEE-0} in this special case is equivalent to the following:
\begin{equation} \label{eq:WEE-1}
   \|Ef\|_{L^{p}(B^{m}(0,R),Hdx)} \lesssim R^{\eta+\frac{n+1-m}{2}(\frac 12 -\frac 1p)} \|f\|_2, 
\end{equation}
where $f$ is supported in $B^{m-1}(0,1)$, and $H$ is any weight function on $\mathbb R^{m}$ which is $(m-1)$-dimensional up to scale $\sqrt{R}$. Our main result in this paper is an almost sharp estimate of the form \eqref{eq:WEE-1} for a certain range of $p$. 

\begin{thm} \label{thm-main-0}
    Let $m\geq 2$ and $p_m = \left( \frac{1}{2} - \frac{1}{m^3-m} \right)^{-1}=2+\frac{4}{m^3-m-2}$. For any $\varepsilon>0$,  there exists a constant $C_\varepsilon$ such that the following holds for any $R\geq 1$, any $f$ with $\supp f \subset B^{m-1}(0,1)$, and any weight function $H$ on $\mathbb R^{m}$ which is $(m-1)$-dimensional up to scale $\sqrt{R}$:
    \begin{equation}\label{eq:thm-main-0}
        \|Ef \|_{L^{p_m}(B^{m}(0,R),Hdx)} \leq C_\varepsilon R^\varepsilon R^{\frac{2m-1}{2}(\frac{1}{p_m}-\frac{1}{2}) + \frac{m}{2(m+1)}} \|f\|_{2}.
    \end{equation}
\end{thm}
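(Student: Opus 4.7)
My plan is to prove Theorem~\ref{thm-main-0} by induction on the scale $R$, combining the Bourgain--Guth $k$-broad-narrow decomposition with parabolic rescaling and a weighted $k$-broad restriction estimate that exploits the $(m-1)$-dimensionality of $H$ at scale $\sqrt{R}$. Specifically, I would assume \eqref{eq:thm-main-0} at all scales $R' < R$ and prove it at scale $R$, choosing an auxiliary parameter $K = R^\delta$ with $\delta \ll \varepsilon$. After partitioning $B^{m-1}(0,1)$ into caps $\tau$ of radius $K^{-1}$ and writing $f = \sum_\tau f_\tau$, the standard dichotomy gives pointwise either a \emph{narrow} bound $|Ef(x)| \leq K^{O(1)} \max_\tau |Ef_\tau(x)|$, or a \emph{$k$-broad} bound $|Ef(x)|^k \leq K^{O(1)} \max \prod_{i=1}^k |Ef_{\tau_i}(x)|$ over $k$-tuples of transverse caps.

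For the narrow contribution I would apply parabolic rescaling to each $Ef_\tau$: the change of variables converts $Ef_\tau$ at scale $R$ into a full extension at scale $R/K^2$, and one checks that the rescaled weight remains $(m-1)$-dimensional up to scale $\sqrt{R/K^2}$ with the same constant $C_H$, which is what allows the induction to close. Applying the induction hypothesis to each rescaled piece and summing over the $\sim K^{m-1}$ caps using near-orthogonality at scale $K^{-1}$ (obtained by inserting a Bourgain--Demeter decoupling so that an $\ell^2$-sum remains available at $L^{p_m}$) yields a gain of $(R/K^2)^\varepsilon$ over the target bound, absorbing all $K^{O(1)}$ losses provided $\delta \ll \varepsilon$.

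For the broad contribution I would wave-packet decompose $Ef$ at scale $\sqrt{R}$ and combine a $k$-broad Kakeya/restriction input in the spirit of Guth's polynomial partitioning for the paraboloid in $\mathbb{R}^m$ with a refined-decoupling-type tube-counting bound in the style of \cite{DGOWWZ21}. The crucial new ingredient is that the $(m-1)$-dimensionality of $H$ at scale $\sqrt{R}$ controls how many $\sqrt{R}$-tubes can meaningfully contribute to any $\sqrt{R}$-ball in $\supp H$; this saving, balanced against the broad $L^{p_m}$ exponent and the $K^{-1}$-scale decoupling losses, is what ultimately produces the exponent $p_m = \bigl(\tfrac{1}{2} - \tfrac{1}{m^3-m}\bigr)^{-1}$ together with the polynomial factor $R^{\frac{2m-1}{2}(1/p_m - 1/2)+m/(2(m+1))}$ in the theorem.

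The main obstacle is establishing the weighted $k$-broad inequality with the sharp exponent. One must identify the correct broadness $k$ and arrange the refined decoupling so that the $(m-1)$-dimensional weight precisely matches the geometry of the transverse wave packets, while simultaneously verifying that the rescaled weight stays admissible after each induction step. A secondary technical point is managing near-orthogonality at the $L^{p_m}$ level rather than $L^2$, which forces one to interleave the broad-narrow argument with Bourgain--Demeter decoupling at an intermediate scale; coordinating these two scales with the $\sqrt{R}$ scale at which the weight assumption lives is where I expect the most careful bookkeeping.
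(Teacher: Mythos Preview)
Your high-level plan (broad--narrow dichotomy, parabolic rescaling in the narrow case, a multilinear input in the broad case, induction on $R$) is indeed the skeleton of the paper's argument. However, inducting directly on \eqref{eq:thm-main-0} does not close, and the paper does not do so. The paper first reformulates the problem as a \emph{refined} inductive proposition in the style of \cite{DZ19}: one replaces the weight $H$ by a union $Y=\bigcup_{k=1}^M B_k$ of lattice $K^2$-cubes, records the density parameter $\gamma=\max_{K^2\le r\le\sqrt R}r^{-n}\#\{B_k\subset B(x,r)\}$, assumes $\|Ef\|_{L^p(B_k)}$ is dyadically constant with $p=\tfrac{2m}{m-2}$, and proves the stronger bound
\[
\|Ef\|_{L^{p}(Y)}\ \lesssim_\varepsilon\ R^\varepsilon\Bigl(\tfrac{\gamma}{M}\Bigr)^{\frac{1}{p_m}-\frac{1}{p}}R^{\frac{2m-1}{2}(\frac1{p_m}-\frac12)+\frac{m}{2(m+1)}}\|f\|_2.
\]
The extra factor $(\gamma/M)^{1/p_m-1/p}$ is the entire point: under parabolic rescaling the pair $(\gamma,M)$ becomes a pair $(\gamma_1,M_1)$ at scale $R/K^2$, and a counting argument gives $\gamma_1/M_1\gtrsim K^{-(n+1)}\gamma/M$. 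This $K^{n+1}$ exactly cancels the deficit in the narrow case; a direct check shows that decoupling at $L^{p_m}$ plus your rescaled-weight observation leaves a positive power of $K$ that cannot be absorbed by the $R^\varepsilon$ gain, so the induction you propose would diverge. Your remark that the rescaled weight stays $(m-1)$-dimensional is correct but not sufficient --- it is precisely the \emph{two-parameter} bookkeeping $(\gamma,M)$ that makes the narrow step close.

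For the broad case, the paper does not use polynomial partitioning or a $k$-broad restriction estimate. The input is the \emph{multilinear refined Strichartz} inequality of \cite{DGLZ18}: if $Q_1,\dots,Q_N$ are $\sqrt R$-cubes on which each $\|Ef_j\|_{L^q(Q_k)}$ is dyadically constant, then with $q=\tfrac{2(m+1)}{m-1}$ one has
\[
\Bigl\|\prod_{j=1}^m|Ef_j|^{1/m}\Bigr\|_{L^q(\cup Q_k)}\lessapprox N^{-\frac{m-1}{m(m+1)}}\prod_j\|f_j\|_2^{1/m}.
\]
The sparsity gain $N^{-(m-1)/(m(m+1))}$ is what your sketch calls ``refined-decoupling-type tube-counting,'' but it is a specific black box, and it must be combined with the $(\gamma/M)$ factor (via $\lambda\le\gamma R^{n/2}$ and Wongkew's volume bound $M\lesssim\gamma R^{(m+n)/2}$) to match the inductive claim. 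Without first passing to the refined proposition, there is nowhere to deposit this $N$-gain, and the broad and narrow cases cannot be balanced to produce the exponent $p_m$.
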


We can also extend Theorem \ref{thm-main-0} to intermediate dimensions. See Section \ref{sec-WPD} for the definition of a function being concentrated in wave packets from $\T_Z(E)$.

\begin{thm} \label{thm-main}
    Let $2\leq m \leq n+1$ and $p_m = \left( \frac{1}{2} - \frac{1}{m^3-m} \right)^{-1}=2+\frac{4}{m^3-m-2}$. For any $\varepsilon>0$,  there exist constants $C_\varepsilon$ and $0< \delta_{\deg} \ll \delta \ll \varepsilon$ such that the following holds for any $R\geq 1$, any $E\geq R^{\delta}$, any transverse complete intersection $Z = Z(P_1,...,P_{n+1-m})$ where $\deg P_i \leq R^{\delta_{\deg}}$, any $f$ concentrated in wave packets from $\T_Z(E)$ with $\supp f \subset B^n(0,1)$, and any weight function $H$ on $\mathbb R^{n+1}$ which is $n$-dimensional up to scale $\sqrt{R}$:
    \begin{equation}\label{eq:thm-main}
        \|Ef \|_{L^{p_m}(B^{n+1}(0,R),Hdx)} \leq C_\varepsilon E^C R^\varepsilon R^{\frac{n+m}{2}(\frac{1}{p_m}-\frac{1}{2}) + \frac{m}{2(m+1)}} \|f\|_{2},
    \end{equation}
    where $C$ is a large dimensional constant.
\end{thm}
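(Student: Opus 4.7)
The strategy is to reduce Theorem \ref{thm-main} to the flat estimate of Theorem \ref{thm-main-0} by linearising the variety $Z$ at the scale $\sqrt R$ where wave packets are essentially straight tubes tangent to $Z$. Since $f$ is concentrated in wave packets from $\T_Z(E)$, up to a $\RapDec(R)\|f\|_2$ error the function $Ef$ is supported in $W := N_{E R^{1/2}}(Z) \cap B^{n+1}(0,R)$. I would cover $W$ by finitely overlapping balls $B_j = B(z_j, E R^{1/2})$ with $z_j \in Z$; by Wongkew's theorem together with the degree bound $\deg P_i \leq R^{\delta_{\deg}}$, the number of such balls is at most $E^{C}R^{m/2 + O(\delta_{\deg})}$.

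On each $B_j$ the variety $Z$ is well approximated by its tangent $m$-plane $V_j = z_j + T_{z_j}Z$, with error absorbed by the concentration parameter $E$. After a bi-Lipschitz change of coordinates straightening $V_j$ into the model plane $\mathbb R^m \times \{0\} \subset \mathbb R^{n+1}$, the wave packets from $\T_Z(E)$ visiting $B_j$ correspond to a function $f_j$ whose Fourier support lies in an $(m-1)$-dimensional cap $\Theta_j \subset B^n(0,1)$. Letting $\pi_j$ be orthogonal projection onto $V_j$, the measure $\tilde H_j := (\pi_j)_*(H \cdot \mathbf 1_{B_j})$ is $(m-1)$-dimensional up to scale $\sqrt R$ on $V_j$ with $C_{\tilde H_j} \lesssim E^C R^{(n+1-m)/2} C_H$, directly from the $n$-dimensional bound on $H$ (cover a slab of dimensions $r\times\cdots\times r\times\sqrt R\times\cdots\times\sqrt R$ by balls of radius $r$). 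Since $Ef_j$ is locally constant on the $\sqrt R$-normal fibres of $V_j$ inside $B_j$, applying Theorem \ref{thm-main-0} at ambient dimension $m$ on the straightened $V_j$ gives
\[
\|Ef_j\|_{L^{p_m}(B_j, H dx)} \leq C_\varepsilon E^C R^\varepsilon R^{\frac{2m-1}{2}(\frac{1}{p_m}-\frac{1}{2}) + \frac{m}{2(m+1)}} R^{\frac{n+1-m}{2p_m}} \|f_j\|_2.
\]

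Summing these local estimates in $\ell^{p_m}$ using $p_m \geq 2$ and the $L^2$-orthogonality $\sum_j\|f_j\|_2^2\lesssim\|f\|_2^2$ (different tangent planes correspond to disjoint Fourier caps) yields a bound of the form \eqref{eq:thm-main} but with the $R$-exponent \emph{overshooting} the target by $R^{(n-m+1)/4}$, owing to the identity $\tfrac{2m-1}{2}(\tfrac{1}{p_m}-\tfrac{1}{2})+\tfrac{n+1-m}{2p_m}=\tfrac{n+m}{2}(\tfrac{1}{p_m}-\tfrac{1}{2})+\tfrac{n-m+1}{4}$. The missing gain must therefore be extracted by exploiting $L^{p_m}$- rather than merely $L^2$-near-orthogonality of wave packets tangent to different tangent planes of $Z$: I expect this to be handled by a scale-by-scale broad/narrow decomposition combined with an induction on $R$ (bootstrapping down the $R^\varepsilon$ loss), so that in the broad regime a multilinear Kakeya-type inequality controls contributions from transversely intersecting tangent planes, while in the narrow regime a Wongkew-type polynomial covering of $Z$ effectively reduces the codimension and invites induction on $m$. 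Closing this final $R^{(n-m+1)/4}$ gap compatibly with both the $E^C$ and $R^\varepsilon$ budgets is the main obstacle.
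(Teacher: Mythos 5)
Your proposal does not establish the theorem, and you say so yourself: the linearize-and-sum argument overshoots by $R^{(n+1-m)/4}$, and you leave the closure of that gap to a sketch ("broad/narrow + multilinear Kakeya + induction") that is not carried out. Since the entire difficulty of the theorem lies in obtaining the sharp exponent, acknowledging the deficit without filling it is not a proof. But there are additional problems beyond the one you flag.

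First, the $L^2$-orthogonality $\sum_j \|f_j\|_2^2 \lesssim \|f\|_2^2$ is false in the setup you describe. Each wave packet tube $T_{\theta,\nu}$ has length $R$ while the balls $B_j = B(z_j, ER^{1/2})$ have radius $ER^{1/2}$, so a single tube intersects on the order of $R^{1/2}/E$ of the $B_j$'s and hence contributes to that many $f_j$'s. This makes $\sum_j\|f_j\|_2^2$ roughly $(R^{1/2}/E)\|f\|_2^2$, not $\lesssim\|f\|_2^2$, and the heuristic "different tangent planes correspond to disjoint Fourier caps" cannot save this: there are $\sim E^C R^{m/2}$ balls but only $\sim R^{(m-1)/2}$ essentially disjoint $(m-1)$-dimensional caps of diameter $ER^{-1/2}$, so many $B_j$'s must repeat the same cap. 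Second, the application of Theorem \ref{thm-main-0} "on $B_j$" is scale-incoherent: $B_j$ has radius $\sim R^{1/2}$, at which scale the $R^{1/2}\times\dots\times R^{1/2}\times R$ wave packets are degenerate (essentially balls), so there is no nontrivial wave packet structure left to exploit, yet your claimed per-ball bound carries the factor $R^{\frac{2m-1}{2}(\frac{1}{p_m}-\frac12)+\frac{m}{2(m+1)}}$ corresponding to a scale-$R$ (not scale-$R^{1/2}$) application of the flat theorem. As stated, the per-ball estimate is not the content of Theorem \ref{thm-main-0} at any single scale.

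The paper's route is also structurally opposite to yours: it does not deduce Theorem \ref{thm-main} from Theorem \ref{thm-main-0}; rather, Theorem \ref{thm-main-0} is the special case $m=n+1$ of Theorem \ref{thm-main}, and both follow from the inductive Proposition \ref{prop:induct}. That proposition is proved by a broad-narrow dichotomy at scale $K^2 = R^{2\delta}$, with the broad contribution controlled by the multilinear refined Strichartz estimate of \cite{DGLZ18} (Theorem \ref{thm:MultiRefiStri}) after dyadic pigeonholing, and the narrow contribution controlled by $(m-1)$-dimensional $\ell^2$-decoupling \cite{BD2015}, parabolic rescaling, and induction on the radius $R$, with the density parameter $\gamma$ tracked carefully through the rescaling via the claim \eqref{eq:narrow_claim}. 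The curvature of $Z$ at scales between $R^{1/2}$ and $R$ enters through the tangency condition and is what makes a single linearization at scale $R^{1/2}$ inadequate; the paper instead never linearizes $Z$ globally, and the variety is carried through the rescaling as a new transverse complete intersection. If you want to pursue your line of thought you would need, at minimum, to replace the $L^2$ summation over $B_j$ by a genuine multi-scale argument that exploits transversality (which is effectively the paper's broad case) and to handle the narrow/coplanar configurations by a decoupling-plus-induction step; at that point you would be reconstructing the paper's proof rather than reducing to Theorem \ref{thm-main-0}.
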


Note that if we take $n+1=m$, Theorem \ref{thm-main} gives exactly Theorem \ref{thm-main-0}.

By the definition in Section \ref{sec-WPD}, if $f$ is concentrated in wave packets from $\T_Z(E)$, then $Ef$ is supported in $N_{ER^{1/2}}(Z)\cap B^{n+1}(0,R)$, where $N_{ER^{1/2}}(Z)$ is the $ER^{1/2}$-neighborhood of $Z$. Due to a theorem of Wongkew \cite{Wongkew}, $N_{ER^{1/2}}(Z)\cap B^{n+1}(0,R)$ can be covered by $\lesssim E^{O(1)} R^{O(\delta_{\deg})} R^{m/2} $ many balls of radius $R^{1/2}$. Hence, by H\"older's inequality and the dimensional condition of $H$ at the scale $\sqrt R$, under the assumptions of Theorem \ref{thm-main} we have the following estimate for any $q\leq p_m$:
\begin{equation}\label{eq:thm-main-q}
        \|Ef \|_{L^{q}(B^{n+1}(0,R),Hdx)} \leq C_\varepsilon E^C R^\varepsilon R^{\frac{n+m}{2}(\frac{1}{q}-\frac{1}{2}) + \frac{m}{2(m+1)}} \|f\|_{2}.
    \end{equation}

\begin{remark}
In Section \ref{sec-eg}, we will show that the exponent of $R$ in \eqref{eq:thm-main-q} is sharp up to the $R^\varepsilon$ loss. However, it seems plausible that \eqref{eq:thm-main-q} could hold for a larger range of $q$ when $m\geq 3$. Based on the example in Section \ref{sec-eg}, we conjecture that under the assumptions of Theorem \ref{thm-main} the estimate \eqref{eq:thm-main-q} holds for $q\leq 2+\frac{4}{(m-1)(m+2)}$.
\end{remark}

As an application, by combining Theorem \ref{thm-main} and the polynomial partitioning method of Guth \cite{Gu18}, one can obtain new $L^p$ estimates for the Schr\"odinger maximal function. See \cite{DLschrodinger}.

In Section \ref{sec-WPD}, we setup the wave packet decomposition and define a function being concentrated in wave packets from $\T_Z(E)$. In Section \ref{sec-eg}, we present a sharp example for Theorem \ref{thm-main}. In Section \ref{sec-mip}, we prove the main inductive proposition from which Theorem \ref{thm-main} follows. The proof is similar to that of the fractal $L^2$ estimate in \cite{DZ19}. The main ingredients include the broad-narrow analysis, multilinear refined Strichartz estimates \cite{DGLZ18}, $l^2$-decoupling theorem \cite{BD2015}, and induction on scales.

\subsection*{Notations.} 
Throughout the article, we write $A\lesssim B$ if $A\leq CB$ for some absolute constant $C$; $A\sim B$ if $A\lesssim B$ and $B\lesssim A$; $A\lesssim_\eps B$ if $A\leq C_\eps B$; $A\lessapprox B$ if $A\leq C_\eps R^\eps B$ for any $\eps>0, R>1$.

For a large parameter $R$, ${\rm RapDec}(R)$ denotes those quantities that are bounded by a huge (absolute) negative power of $R$, i.e. ${\rm RapDec}(R) \leq C_N R^{-N}$ for arbitrarily large $N>0$. Such quantities are negligible in our argument.

\begin{acknowledgement}
XD is supported by NSF CAREER DMS-2237349 and Sloan Research Fellowship. JL is supported by AMS-Simons Travel Grant. HW is supported by NSF CAREER DMS-2238818 and NSF DMS-2055544. RZ is supported by NSF CAREER DMS-2143989 and Sloan Research Fellowship. The first, third and fourth authors would like to thank the 2018 Mathematics Research Communities of AMS  that inspired this research.
\end{acknowledgement}

\section{Wave Packet Decomposition} \label{sec-WPD}

We use the same setup as in Section 3 of \cite{Gu18}. Let $f$ be a function on $B^n(0,1)$, we break it up into pieces $f_{\theta,\nu}$ that are essentially localized in both position and frequency. Cover $B^n(0,1)$ by finitely overlapping balls $\theta$ of radius $R^{-1/2}$ and cover $\ZR^n$ by finitely overlapping balls of radius $R^{\frac{1+\delta}{2}}$, centered at $\nu \in R^{\frac{1+\delta}{2}}\ZZ^{n}$. Using partition of unity, we have a decomposition
$$
f=\sum_{(\theta,\nu)\in \ZT} f_{\theta,\nu} + {\rm RapDec}(R)\|f\|_{L^2}\,,
$$
where $f_{\theta,\nu}$ is supported in $\theta$ and has Fourier transform essentially supported in a ball of radius $R^{1/2+\delta}$ around $\nu$. The functions $f_{\theta,\nu}$ are approximately orthogonal. In other words, for any set $\ZT'\subset \ZT$ of pairs $(\theta,\nu)$, we have
$$
\big\|\sum_{(\theta,\nu)\in \ZT'} f_{\theta,\nu}\big\|_{L^2}^2
\sim \sum_{(\theta,\nu)\in \ZT'} \|f_{\theta,\nu}\|_{L^2}^2\,.
$$
For each pair $(\theta,\nu)$, the restriction of $Ef_{\theta,\nu}$ to $B_R$ is essentially supported on a tube $T_{\theta,\nu}$ with radius $R^{1/2+\delta}$ and length $R$, with direction $G(\theta)\in S^{n}$ determined by $\theta$ and location determined by $\nu$, more precisely,
$$
T_{\theta,\nu} :=\left\{(x',x_{n+1}) \in B_R : |x'+2x_{n+1}\omega_\theta -\nu|\leq R^{1/2+\delta}\right\}\,.
$$
Here $\omega_\theta \in B^n(0,1)$ is the center of $\theta$, and 
$$
G(\theta)=\frac{(-2\omega_\theta,1)}{|(-2\omega_\theta,1)|}\,.
$$

We write $Z(P_1,\cdots,P_{n+1-m})$ for the set of common zeros of the polynomials $P_1,\cdots,P_{n+1-m}$. The variety $Z(P_1,\cdots,P_{n+1-m})$ is called a \emph{transverse complete intersection} if 
$$
\nabla P_1(x) \wedge \cdots \wedge \nabla P_{n+1-m}(x) \neq 0 \text{ for all } x\in Z(P_1,\cdots,P_{n+1-m})\,.
$$
Let $Z$ be an algebraic variety and $E$ a positive number. For any $(\theta,\nu) \in\ZT$,
we say that $T_{\theta,\nu}$ is \emph{$E R^{-1/2}$-tangent} to $Z$ if
$$T_{\theta,\nu}\subset N_{E R^{1/2}}Z \cap B_R,\quad and$$
\begin{equation*}
 \text{Angle}(G(\theta),T_zZ)\leq E R^{-1/2}
\end{equation*}
for any non-singular point $z\in N_{2 E R^{1/2}} ( T_{\theta,\nu}) \cap 2B_R \cap Z$.

Let
$$
\ZT_Z (E):=\{(\theta,\nu)\in\ZT\,|\,T_{\theta,\nu} \text{ is $E R^{-1/2}$-tangent to}\, Z\}\,,
$$
and  we say that $f$ is concentrated in wave packets from $\ZT_Z(E)$ if
$$
 \sum_{(\theta,\nu)\notin \ZT_Z(E)} \|f_{\theta,\nu}\|_{L^2} \leq {\rm RapDec}(R)\|f\|_{L^2}.
$$
Since the radius of $T_{\theta, \nu}$ is $R^{1/2 + \delta}$, $R^\delta$ is the smallest interesting value of $E$.

\section{A Sharp Example} \label{sec-eg}
In this section, we show that the exponent of $R$ in \eqref{eq:thm-main-q} is sharp up to the $R^\varepsilon$ loss, and also that \eqref{eq:thm-main-q} fails when $q>2+\frac{4}{(m-1)(m+2)}$. The example we use is the same as the one in \cite{DOKZFalconerDec}. We include it here for completeness. 

Let $c=1/1000$ be a fixed small constant, $0<\ka<1/2$, and $2\leq k\leq n+1$. Denote
$$x=(x_1,\cdots,x_d)=(x',x'',x_{n+1})\in B^{n+1}(0,R)\,,$$ $$\xi=(\xi_1,\cdots,\xi_{n})=(\xi',\xi'')\in B^{n}(0,1)\,,$$
where $$
x'=(x_1,\cdots,x_{n+1-k}), \quad
x''=(x_{n+2-k},\cdots,x_{n}),$$
$$
\xi'=(\xi_1,\cdots,\xi_{n+1-k}), \quad
\xi''=(\xi_{n+2-k},\cdots,\xi_{n}).$$

For simplicity, we denote $B^{n+1}(0,r)$ by $B^{n+1}_r$,  and write the interval $(-r,r)$ as $I_r$. Let $g(\xi)=\hichi_\Om(\xi)$, where the set $\Om$ is defined by
\begin{equation} \label{Om}
    \Om:=\left[B^{n+1-k}_{cR^{-1/2}} \times \left(2\pi R^{-\ka} \ZZ^{k-1}+B^{k-1}_{cR^{-1}}\right)\right]\cap B^{n}_1\,.
\end{equation}

Next, we define a set $\La$ in $B^{n+1}_R$ by \begin{equation}\label{La}
    \La:=\left[B^{n+1-k}_{cR^{1/2}}\times \left(R^{\ka}\ZZ^{k-1}+B^{k-1}_{c}\right)\times\left(\frac{1}{2\pi}R^{2\ka}\ZZ+I_{c}\right)\right] \cap B^{n+1}_R\,,
\end{equation}
and define $H:=\hichi_\La$. See Figure \ref{fig:Lambda} below.
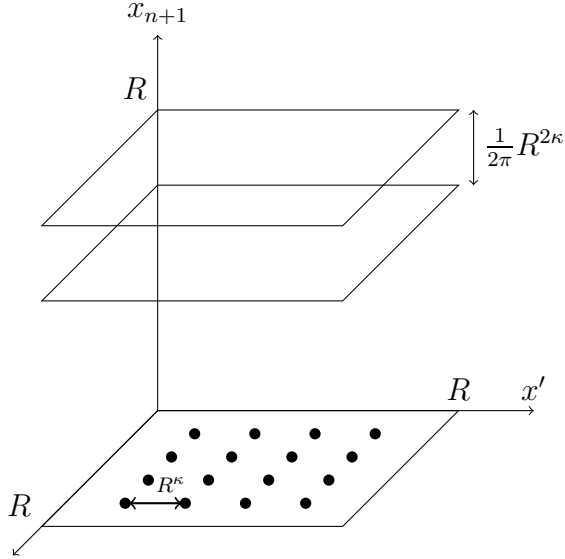
\begin{figure}[ht] 
    \centering
\begin{tikzpicture}
    \draw[->] (0,0,0)  -- (5,0,0) node[pos = 0.8, above] {$R$} node[above] {$x'$};
    \draw[->] (0,0,0) -- (0,5,0) node[pos = 0.8, anchor= south east] {$R$} node[above] {$x_{n+1}$};
    \draw[->] (0,0,0) -- (0,0,5) node[pos = 0.8, anchor= south east] {$R$};

    \foreach \i in {0,3,4}
    {
    \draw (0,\i,0) -- (0,\i,4) -- (4,\i,4) -- (4,\i,0) -- (0,\i,0);
    }

    \foreach \i in {1,...,4}
        \foreach \j in {1,...,4}
            \foreach \k in {0}
        {
        \node [circle,fill,inner sep=1.5pt] at (\i*4/5,\k,\j*4/5) {};
        }

    \draw[<->] (4.2,4,0) -- (4.2,3,0) node[pos = 0.5, right] {$\frac{1}{2\pi}R^{2\kappa}$};

    \draw[<->, thick] (8/5-0.05,0,4-4/5) -- (4/5+0.05,0,4-4/5) node [pos = 0.2, above] {\tiny $R^{\kappa}$};

\end{tikzpicture}

\caption{The set $\Lambda$ intersecting the plane \{$x'' = x''_0$\}, $|x''_0| \leq cR^{1/2}$. Each dot on the $x'$-plane represents a ball of radius $c$. Each plane represents a plank of thickness $c$. }
\label{fig:Lambda}
\end{figure}

From the definition, it follows that
\begin{equation}\label{OmSize}
   |\Om|\sim R^{(\ka-1)(k-1)-(n+1-k)/2}\, ,
\end{equation}
and
\begin{equation}\label{sizeLa}
   |\La|\sim R^{(n+1-k)/2+(1-\ka)(k-1)+1-2\ka}=R^{(n+1+k)/2-\ka(k+1)}\,.
\end{equation}
It is straightforward to check
\begin{equation} \label{xix}
    x'\cdot\xi'+x''\cdot \xi'' +x_{n+1}|\xi'|^2+x_{n+1}|\xi''|^2 \in 2\pi \mathbb Z + (-\frac{1}{100},\frac{1}{100})\,,
\end{equation}
provided that $\xi\in\Om$ and $x\in\La$.

From the constructions, we have the following properties:
\begin{itemize}
    \item Wave packets of $g$ are tangent to a $k$-dimensional subspace;
    \item By \eqref{xix}, $|Eg(x)|\sim |\Om|, \forall x\in \La$;
    \item $\|g\|_2=|\Om|^{1/2}$,
    \item By choosing $\ka:=\frac{1}{2(k+1)}$, one can straightforwardly verify the ball condition at all scales up to $R^{1/2}$:
$$
\int_{Q} H(x)dx =| Q \cap \La | \lesssim r^n, \quad \forall r\text{-ball } Q, \forall 1\leq r\leq R^{1/2};
$$
and also at all scales up to $R$ in the case that $k\leq n$.
\end{itemize}

By a direct calculation we get
$$\frac{\|Eg\|_{L^q(B^{n+1}(0,R);Hdx)}}{\|g\|_2} \gtrsim |\Om|^{1/2} |\La|^{1/q} \sim R^{\frac{n+k}{2}(\frac{1}{q}-\frac{1}{2}) + \frac{k}{2(k+1)}}.
$$

Therefore, if there holds the estimate 
$$
 \|Ef \|_{L^{q}(B^{n+1}(0,R),Hdx)} \lessapprox R^\beta \|f\|_{2}
$$
under the assumptions of Theorem \ref{thm-main}, then 
$$
\beta \geq \max_{2\leq k\leq m} \frac{n+k}{2}(\frac{1}{q}-\frac{1}{2}) + \frac{k}{2(k+1)}.
$$
By taking $k=m$, we see that the exponent $\frac{n+m}{2}(\frac{1}{q}-\frac{1}{2}) + \frac{m}{2(m+1)}$ of $R$ in \eqref{eq:thm-main-q} is sharp up to the $R^\varepsilon$ loss. On the other hand, by taking $k=m-1$ we see that if \eqref{eq:thm-main-q} holds, then we must have 
\[
\frac{n+m}{2}(\frac{1}{q}-\frac{1}{2}) + \frac{m}{2(m+1)} \geq \frac{n+m-1}{2}(\frac{1}{q}-\frac{1}{2}) + \frac{m-1}{2m}\,,
\]
i.e. $q\leq 2+\frac{4}{(m-1)(m+2)}$.

\section{Main Inductive Proposition} \label{sec-mip}
By a similar argument as in \cite{DZ19}, Theorem \ref{thm-main} is a result of the following inductive proposition. We say that the quantities in a collection are dyadically constant if all the quantities are in the same interval of the form $[2^j,2^{j+1}]$, where $j$ is an integer.

\begin{prop}\label{prop:induct}
    Let $2\leq m \leq n+1$ and $p = \frac{2m}{m-2}$ ($p=\infty$ when m=2). For any $\varepsilon>0$,  there exist constants $C_\varepsilon$ and $0< \delta_{\deg} \ll \delta \ll \varepsilon$ such that the following holds for all $R\geq 1$, all $E\geq R^{\delta}$, all transverse complete intersection $Z = Z(P_1,...,P_{n+1-m})$ where $\deg P_i \leq R^{\delta_{\deg}}$, and all $f$ concentrated in wave packets from $\T_Z(E)$ with $\supp f \subset B^n(0,1)$. Let $K = R^\delta$. Suppose that $Y = \cup_{k=1}^M B_k$ is a union of lattice $K^2$-cubes in $ N _{ER^{1/2}}(Z) \cap B^{n+1}(0,R)$.

    Let $\gamma$ be given by
    \begin{equation}\label{eq:prop:induct:ndimassu}
        \gamma: =  \max_{\substack{B^{n+1}(x',r) \subset B^{n+1}(0,R)\\x' \in B^{n+1}(0,R) , K^2\leq r \leq \sqrt{R} }} \frac{\# \{B_k: B_k \subset B(x',r)\}}{r^n}.
    \end{equation}

    Suppose further that 
    $$\|Ef\|_{L^p(B_k)} \text{ is dyadically a constant in }k = 1,2,...,M.$$

    Then
    \begin{equation}\label{eq:prop:induct}
        \|Ef \|_{L^p(Y)} \leq C_\varepsilon E^C R^\varepsilon \left ( \frac{\gamma}{M} \right)^{\frac{1}{p_m} - \frac{1}{p}} R^{\frac{n+m}{2}(\frac{1}{p_m}-\frac{1}{2}) + \frac{m}{2(m+1)}} \|f\|_{2},
    \end{equation}
    where
    $$
    2 < p_m = \left( \frac{1}{2} - \frac{1}{m^3-m} \right)^{-1} < p
    $$
    and $C$ is a large dimensional constant.
\end{prop}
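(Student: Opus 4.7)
The plan is to establish Proposition \ref{prop:induct} by induction on $R$ (and the tangency dimension $m$) via a broad/narrow decomposition at spatial scale $K^2 = R^{2\delta}$, in close parallel to the fractal $L^2$ argument of \cite{DZ19}; the main ingredients are a multilinear refined Strichartz estimate in the broad part and the $\ell^2$-decoupling theorem in the narrow part. First I would partition $B^{n+1}(0,R)$ into $K^2$-cubes and cover $\supp f$ by $K^{-1}$-caps $\tau$, writing $f = \sum_\tau f_\tau$. For each $K^2$-cube $B_k \subset Y$, I call $B_k$ broad if there exist $m$ caps $\tau_1,\dots,\tau_m$ with $G(\tau_1),\dots,G(\tau_m)$ quantitatively transverse, each contributing a $K^{-O(1)}$-fraction of $\|Ef\|_{L^p(B_k)}$; otherwise $B_k$ is narrow and, by the standard Bourgain-Guth pigeonholing, the significant caps on $B_k$ lie in a $K^{-1}$-neighborhood of some $(m-1)$-dimensional algebraic surface inside the paraboloid. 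Splitting $Y = Y_{\mathrm{br}} \cup Y_{\mathrm{nar}}$, it then suffices to bound the two contributions separately.

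For the broad contribution, I would apply the multilinear refined Strichartz estimate of \cite{DGLZ18}, which controls the $m$-linear product $|Ef_{\tau_1}\cdots Ef_{\tau_m}|^{1/m}$ on a union of $R^{1/2}$-balls in $B^{n+1}(0,R)$ in terms of $\|f\|_2$ and the number of such balls. Summing over the broad $K^2$-cubes via the dyadic-constancy hypothesis, and invoking the $n$-dimensional ball condition on $H$ at scale $\sqrt R$ to convert counting of $\sqrt R$-balls into the $H$-weighted integral, produces the exponent $R^{\frac{n+m}{2}(1/p_m-1/2) + \frac{m}{2(m+1)}}$ together with the required $(\gamma/M)^{1/p_m - 1/p}$ factor; the summand $\frac{m}{2(m+1)}$ is precisely the one read off from the sharp example in Section \ref{sec-eg}.

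For the narrow contribution, I would apply $\ell^2$-decoupling \cite{BD2015} on each narrow $K^2$-cube to pass to an $\ell^p$-sum over $K^{-1}$-caps contained in a thin neighborhood of the $(m-1)$-dimensional subvariety identified above, incurring only a loss of $K^{O(\varepsilon)} \leq R^{\delta \varepsilon}$. After parabolic rescaling each such cap to unit size, $Ef_\tau$ becomes an extension at the smaller spatial scale $R/K^2$ whose wave packets are tangent to a transverse complete intersection $Z'$ of codimension one higher than $Z$ (obtained by intersecting $Z$ with the polynomial surface produced by the narrow step). I can then invoke the inductive hypothesis either at the smaller scale $R/K^2$ with the same $m$, or with $m$ replaced by $m-1$, after pigeonholing the narrow $K^2$-cubes so that $\|Ef\|_{L^p(B_k)}$ and the associated dimensional constants remain dyadically constant on each cluster.

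The main obstacle, as in \cite{DZ19}, will be arranging that the broad gain from multilinear refined Strichartz exactly offsets the decoupling loss $K^{(m-2)/2 + \varepsilon}$ at every induction step, while correctly tracking how the $n$-dimensional ball condition on $H$ and the parameter $\gamma$ transform under parabolic rescaling at the smaller scale $R/K^2$. The specific value $p_m = (1/2 - 1/(m^3-m))^{-1}$ arises from solving the fixed-point equation produced by this balance between the broad and narrow exponents; carrying the $(\gamma/M)$ factor, the dyadic-constancy class, and the $E^C$ dependence through every step of the induction (with $\delta_{\deg} \ll \delta \ll \varepsilon$ chosen to absorb all polynomial-degree and scale losses) is the chief technical bookkeeping hurdle.
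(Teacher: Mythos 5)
Your overall blueprint matches the paper's: broad/narrow decomposition at scale $K^2$, multilinear refined Strichartz from \cite{DGLZ18} on the broad part, $\ell^2$-decoupling \cite{BD2015} plus parabolic rescaling on the narrow part, and induction on the scale $R$. But two places in your narrow-case mechanism would not survive being filled in.

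First, the narrow condition in this proposition is \emph{linear}, not algebraic: a $K^2$-cube $B$ is narrow if its significant caps all satisfy $\angle(G(\tau),V)\le\frac{1}{100nK}$ for some $(m-1)$-dimensional linear subspace $V$. No new polynomial hypersurface is produced, and $Z$ is not intersected with anything. Parabolic rescaling sends $Z(P_1,\dots,P_{n+1-m})$ to $Z(P_1\circ F^{-1},\dots,P_{n+1-m}\circ F^{-1})$, which has the \emph{same} codimension $n+1-m$, and the tangency condition is preserved at the new scale $R_1=R/K^2$; the induction invokes the same $m$. Your alternative scenario --- inducting ``with $m$ replaced by $m-1$'' into a codimension-one-higher $Z'$ obtained by intersecting $Z$ with a surface produced in the narrow step --- describes a polynomial-partitioning dimension-reduction move that simply does not occur inside Proposition~\ref{prop:induct}.

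Second, the weight $H$ does not appear in the proposition at all; the density is carried by $\gamma$. In the broad part the relevant inputs are $\lambda\le\gamma R^{n/2}$, $1\le\gamma(2K^2)^n$, and Wongkew's bound $M\lesssim E^{O(1)}\gamma R^{(n+m)/2+O(\delta_{\deg})}$, not an $H$-weighted conversion. In the narrow part, the key ingredient missing from your sketch is the geometric counting claim \eqref{eq:narrow_claim}, proved by double-counting incidences between boxes $\square$ and narrow cubes $B$, and between tubes $T_r$ and cubes $B$. That claim is precisely what ties the rescaled density parameters $(\gamma_1, M_1)$ back to the original $(\gamma,M)$, so that the factor $(\gamma/M)^{1/p_m-1/p}$ propagates through the induction; without it the powers of $K$ do not cancel and the narrow induction does not close.
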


\begin{proof} We induct on the radius $R$. Note that estimate \eqref{eq:prop:induct} is trivial when $R\lesssim 1$. We may assume that Proposition \ref{prop:induct} is true for $R$ replaced by $R_1=R/K^2$.

We start with a broad-narrow analysis following \cites{BG11,Bo12,BD2015,Gu16}. We decompose the frequency space $B^{n}(0,1)$ into disjoint $K^{-1}$-cubes $\tau$. For each physical $K^2$-cube $B$, define the \textit{significant} set
$$
\S(B) : =\left  \{ \tau : \|E f_{\tau}\|_{L^p(B)} \geq \frac{1}{(100K)^{n}} \|Ef\|_{L^p(B)} \right\}.
$$

Since the number of $K^{-1}$-cubes $\tau$ is $\sim K^n$, triangle inequality gives
$$
\left \|\sum_{\tau \in \S(B)} E f_\tau \right\|_{L^p(B)} \sim \|Ef\|_{L^p(B)}.
$$
The above relation explains the name significant set $\S(B)$.



The physical $K^2$-cube $B$ is said to be \textit{narrow} if there is an $(m-1)$-dimensional subspace $V$ such that for all $\tau \in \S(B)$,
$$
\angle(G(\tau),V) \leq \frac{1}{100nK}.
$$ Otherwise, the physical $K^2$-cube $B$ is said to be \textit{broad}. By the definition of broad cube $B$, there exist a constant $C_n^{br}$ and caps $\tau_1,...,\tau_m \in \S(B)$ such that for any $v_j \in G(\tau_j)$,
$$
|v_1 \wedge v_2 \wedge ... \wedge v_m| \geq C_n^{br} K^{-n}.
$$

Recall that $\|Ef\|_{L^p(B_k)}$ is dyadically a constant for $B_k \subset Y = \cup_{k=1}^M B_k$. To control $\|Ef\|_{L^p(Y)}$, it suffices to estimate $M/2$ many $B_k$'s. If at least a half of the cubes $B_k$ is broad, we will estimate these broad $B_k$ only in Subsection \ref{sec:broad}. If otherwise, we will estimate narrow $B_k$ only in Subsection \ref{sec:narrow}.

\subsection{Broad case} \label{sec:broad}

Define the collection of $m$-tuple of transverse caps:
\begin{equation}\label{eq:m-transverse}
    \Gamma: = \{ (\tau_1,...,\tau_m) : |v_1 \wedge ... \wedge v_m | \geq C_n^{br} K^{-n} \quad \text{for any } v_j \in G(\tau_j)\}.
\end{equation}

For each broad $B$, there exists $(\tau_1(B),...,\tau_m(B)) \in \Gamma$ such that
$$
\|Ef\|_{L^p(B)}^p \lesssim K^{O(1)} \|Ef_{\tau_j(B)}\|_{L^p(B)}^p
$$
for all $1\leq j \leq m$.
Therefore,
\begin{equation}\label{eq:broad1}
    \|Ef\|_{L^p(B)}^p \lesssim K^{O(1)} \prod_{j=1}^m\|Ef_{\tau_j(B)}\|_{L^p(B)}^{p/m}.
\end{equation}

By physical translations and locally constant principle (see for instance \cite{DZ19}), the geometric average and the $L^p$ norm over $K^2$-cube $B$ on the right hand side of \eqref{eq:broad1} can be interchanged at a cost of $K^{O(1)}$ and harmless physical translations:
\begin{equation}\label{eq:broad2}
    \prod_{j=1}^m\|Ef_{\tau_j(B)}\|_{L^p(B)}^{p/m} \lesssim K^{O(1)} \left \|\prod_{j=1}^m |Ef_{\tau_j(B) , v_j(B)}|^{1/m}\right\|_{L^p(B)}^{p},
\end{equation}
where 
$$
v_j(B) \in B(0,K^2) \cap \Z^{n+1}, \quad f_{\tau_j , (v',v_{n+1})}(\xi) = f_{\tau_j}(\xi) e^{i ( v' \cdot \xi  + v_{n+1} |\xi|^2)}.
$$

Since there are only $K^{O(1)}$ many choices in $(\tau_1,...,\tau_m) \in \Gamma$ and $v_1,...,v_m \in B(0,K^2) \cap \Z^{n+1}$, there exists $(\tau_1,...,\tau_m) \in \Gamma$ and $v_1,...,v_m \in B(0,K^2) \cap \Z^{n+1}$ such that $\tau_j = \tau_j(B_k)$ and $v_j=v_j(B_k), j=1,...,m$ for $K^{-C} M$ many broad $B_k \subset Y$. Let $\mathcal{B}$ denote the collection of these broad cubes $B_k$. In the rest of the proof of the broad case, we fix caps $(\tau_1,...,\tau_m)$ and translations $v_1,...,v_{m}$. For brevity, write $f_j = f_{\tau_j,v_j}$. Since $\|Ef\|_{L^p(B_k)}$ is dyadically a constant in $k$,  summing over any sub-collection $\mathcal{B}'$ of broad cubes $B_k \in \mathcal{B}$ gives
\begin{equation}\label{eq:broad3}
    \|Ef\|_{L^p(Y)}^p \lesssim K^{O(1)} \left(\frac{M}{\# \mathcal{B}'}\right)^{O(1)} \left \|\prod_{j=1}^m |Ef_{j}|^{1/m}\right\|_{L^p(\cup_\mathcal{B'}B)}^{p}.
\end{equation}

In order to exploit the transversality of caps $\tau_j$, we apply the following multilinear refined Strichartz estimate from \cite{DGLZ18}:

\begin{thm}\label{thm:MultiRefiStri}
    Let $q = \frac{2(m+1)}{m-1}$. Suppose that $(\tau_1,...,\tau_m) \in \Gamma$ are $m$ transverse caps, where $\Gamma$ is defined as in \eqref{eq:m-transverse}. Let $Z = Z(P_1,...,P_{n+1-m})$ be a transverse complete intersection with $\deg P_i \leq R^{\delta_{\deg}}$, where $0<\delta_{\deg{}} \ll \delta \ll \varepsilon$ are small parameters. Let $f_j \in L^2$ be supported on $\tau_j$ and concentrated in wave packets from $\T_Z(E)$. 

    Let $Q_1,...,Q_N$ be lattice $R^{1/2}$-cubes in $ N _{ER^{1/2}}(Z) \cap B^{n+1}(0,R)$ such that
    $$
    \|Ef_j\|_{L^q(Q_k)} \text{ is dyadically a constant in }k, \text{ for each }j=1,2,...,m.
    $$

    Then
    $$
    \left \|\prod_{j=1}^m |Ef_{j}|^{1/m}\right\|_{L^q(\cup_{k=1}^N Q_k)} \lessapprox E^{O(1)} R^{-\frac{n+1-m}{2(m+1)}} N^{-\frac{m-1}{m(m+1)}} \prod_{j=1}^m \|f_{j}\|_{2}^{1/m}.
    $$
\end{thm}

However, the right hand side of \eqref{eq:broad3} cannot be estimated directly by Theorem \ref{thm:MultiRefiStri}:
\begin{itemize}
    \item \textit{Lebesgue exponents do not match: $p = \frac{2m}{m-2} > q = \frac{2(m+1)}{m-1}$.}\\
    The locally constant principle allows us to pass from $L^p$ to $L^q$ via the ``reverse'' H\"older's inequality:
    \begin{equation}\label{eq:broadReverseHolder}
        \left \|\prod_{j=1}^m |Ef_{j}|^{1/m}\right\|_{L^p(B)} \lesssim K^{O(1)} \left \|\prod_{j=1}^m |Ef_{j}|^{1/m}\right\|_{L^q(B)}.
    \end{equation}
    The $\ell^p$ over the collection $\mathcal{B}$ can be better estimated by $\ell^q$ if we dyadic pigeonhole for a significant sub-collection of broad cubes $B$ so that the left hand side of \eqref{eq:broadReverseHolder} is comparable for all $B$ in this sub-collection. Precisely, for a dyadic number $A \in [R^{-C}\|f\|_2 , R^C\|f\|_2]$, let $\mathcal{B}_A$ be a sub-collection of $\mathcal{B}$ such that
    $$
    \left \|\prod_{j=1}^m |Ef_{j}|^{1/m}\right\|_{L^p(B)} \sim A
    $$
    for each $B \in \mathcal{B}_A$. Here, $C$ is a large constant so that the contribution from other dyadic numbers $A$ can be ignored. There are $\sim \log R \sim_\varepsilon \log K$ many dyadic numbers $A$. Therefore, by pigeonholing, there exists some dyadic number $\Tilde{A}$ such that $\mathcal{B}_{\Tilde{A}}$ contains at least $K^{-C}M$ many cubes $B$.

    For any sub-collection $\mathcal{B}'$ of $\mathcal{B}_{\Tilde{A}}$, we have
    \begin{equation}\label{eq:broadDyadPigeon1}
        \left \|\prod_{j=1}^m |Ef_{j}|^{1/m}\right\|_{L^p(\cup_{\mathcal{B}'}B)} \lesssim |\# \mathcal{B}'|^{\frac{1}{p}-\frac{1}{q}} \left( \sum_{B \in \mathcal{B}'} \left \|\prod_{j=1}^m |Ef_{j}|^{1/m}\right\|_{L^p(B)}^q\right )^{1/q}.
    \end{equation}
    By \eqref{eq:broadReverseHolder} and \eqref{eq:broadDyadPigeon1}, we have
    \begin{equation}\label{eq:broadDP+RH}
        \left \|\prod_{j=1}^m |Ef_{j}|^{1/m}\right\|_{L^p(\cup_{\mathcal{B}'}B)} \lesssim K^{O(1)}|\# \mathcal{B}'|^{\frac{1}{p}-\frac{1}{q}} \left \|\prod_{j=1}^m |Ef_{j}|^{1/m}\right\|_{L^q(\cup_{\mathcal{B}'}B)}.
    \end{equation}
    \item \textit{The arrangement of $B\in \mathcal{B}$ in each lattice $R^{1/2}$-cubes does not obey the assumption of Theorem \ref{thm:MultiRefiStri}.}\\
    We further sort the collection $\mathcal{B}_{\Tilde{A}}$ as follows. For a dyadic number $\lambda \in [1, R^{\frac{n+1}{2}}]$ and dyadic numbers $\iota_1,...,\iota_m \in [R^{-C}\|f\|_{2}, R^C\|f\|_{2}]$, let $\mathcal{B}_{\Tilde{A},\lambda,\iota_1,...,\iota_m}$ be a sub-collection of $\mathcal{B}_{\Tilde{A}}$ such that each lattice $R^{1/2}$-cube $Q$ containing some cubes $B$ in the sub-collection contains $\sim \lambda$ cubes from $\mathcal{B}_{\Tilde{A}}$, and
    $$
    \|E f_{j}\|_{L^q(Q)} \sim \iota_{j}, \quad \text{for each }j=1,...,m.
    $$
    Similarly, there are $O_\varepsilon(\log K)$ many dyadic numbers $\lambda, \iota_1,...,\iota_m$. By pigeonholing, there exists a sub-collection $\mathcal{B}' = \mathcal{B}_{\Tilde{A},\lambda,\iota_1,...,\iota_m}$ of $\mathcal{B}_{\Tilde{A}}$ which contains at least $ K^{-C}M$ many cubes $B$. Let $Q_1,...,Q_N$ be lattice $R^{1/2}$-cubes containing some $B \in \mathcal{B}'$. By counting the number of cubes $B \in \mathcal{B}'$, we have the relation
    \begin{equation}
        K^{-C}M \lesssim \lambda N. 
    \end{equation}
    By Theorem \ref{thm:MultiRefiStri} and the relation above, we obtain
    \begin{equation}\label{eq:broadMRS}
        \left \|\prod_{j=1}^m |Ef_{j}|^{1/m}\right\|_{L^q(\cup_{\mathcal{B}'}B)} \lesssim_\varepsilon K^{O(1)} 
        E^{O(1)} R^{-\frac{n+1-m}{2(m+1)}+\frac{\varepsilon}{2}} \left(\frac{M}{\lambda}\right)^{-\frac{m-1}{m(m+1)}} \prod_{j=1}^m \|f_{j}\|_{2}^{1/m}.
    \end{equation}
\end{itemize}

Combining \eqref{eq:broad3}, \eqref{eq:broadDP+RH}, \eqref{eq:broadMRS} and the fact that $\|f_{j}\|_{2} \leq \|f\|_{2}$, we get
\begin{equation}\label{eq:broad4}
    \|Ef\|_{L^p(Y)} \lesssim_{\varepsilon} K^{O(1)} E^{O(1)} R^{-\frac{n+1-m}{2(m+1)}+\frac{\varepsilon}{2}} M^{\frac{1}{p}-\frac{1}{q}}\left(\frac{M}{\lambda}\right)^{-\frac{m-1}{m(m+1)}} \|f\|_{2}.
\end{equation}
Moreover, by putting $r=R^{1/2}$ into \eqref{eq:prop:induct:ndimassu}, we have
    \begin{equation}\label{eq:broadLambda}
        \lambda \leq \gamma R^{n/2},
    \end{equation}
and by considering $B(x',r) = 2B_k$ in \eqref{eq:prop:induct:ndimassu}, we obtain
\begin{equation}\label{eq:broadGamma}
    1 \leq \gamma (2K^2)^n.
\end{equation}

On the other hand, by a theorem of Wongkew \cite{Wongkew} on 
the volume of neighborhoods of real algebraic varieties, $N_{ER^{1/2}}(Z)\cap B^{n+1}(0,R)$ can be covered by $\lesssim E^{n+1-m}R^{m/2} (R^{\delta_{\deg}})^{n+1-m}$ many balls of radius $R^{1/2}$. Each $R^{1/2}$-ball contain at most $\gamma R^{n/2}$ many cubes $B_k$. Therefore,
\begin{equation}\label{eq:broadM}
    M \leq  E^{O(1)} \gamma R^{\frac{m+n}{2}+O(\delta_{\deg})}.
\end{equation}

To prove \eqref{eq:prop:induct}, by \eqref{eq:broad4} and \eqref{eq:broadLambda}, it suffices to show that
$$
R^{-\frac{n+1-m}{2(m+1)}} M^{\frac{1}{p}-\frac{1}{q}}\left(\frac{M}{\gamma R^{n/2}}\right)^{-\frac{m-1}{m(m+1)}} \leq E^{O(1)} K^{O(1)}\left ( \frac{\gamma}{M} \right)^{\frac{1}{p_m} - \frac{1}{p}} R^{\frac{n+m}{2}(\frac{1}{p_m}-\frac{1}{2}) + \frac{m}{2(m+1)}+\frac{\varepsilon}{2}}.
$$
Rearranging gives
$$
M^{\frac{m-2}{m^3-m}} \leq E^{O(1)} K^{O(1)} \gamma^{\frac{2m-3}{m^3-m}} R^{\frac{(m-2)(m+n)}{2(m^3-m)}+\frac{\varepsilon}{2}},
$$
which can be seen by \eqref{eq:broadGamma} and \eqref{eq:broadM}.

We have finished the proof of the broad case.

\subsection{Narrow case} \label{sec:narrow}

Recall from the definition that for each narrow $K^2$-cube $B$, there is an $(m-1)$-dimensional subspace $V$ such that for all $\tau \in \S(B)$,
$$
\angle(G(\tau),V) \leq \frac{1}{100nK}.
$$
The narrow assumption allows us to apply the following $(m-1)$-dimensional decoupling inequality:
\begin{align}\label{eq:lem:dec_variety}
     &\|Ef\|_{L^p(B)} \sim \left \|\sum_{\tau \in \S(B)} E f_\tau \right\|_{L^p(B)} \notag\\
        &\lesssim_\varepsilon K^{\varepsilon^4} \left( \sum_{\tau \in \mathcal{S(B)}} \|Ef_\tau\|_{L^p(10B)}^2\right)^{1/2} + R^{-1000n}\|f\|_{2}\\
         &\leq K^{\varepsilon^4} \left( \sum_{\tau \in \mathcal{S}} \|Ef_\tau\|_{L^p(10B)}^2\right)^{1/2} + R^{-1000n}\|f\|_{2}. \notag
\end{align}
where $p = \frac{2m}{m-2}$, and $\mathcal{S}$ is the set of $K^{-1}$-cubes tiling $B^{n}(0,1)$.

Fix $\tau \in \mathcal{S}$. We hope to bound $\|Ef_{\tau}\|_{L^p(10B)}$ on the right-hand side of \eqref{eq:lem:dec_variety} by the induction on radius following a parabolic rescaling argument. For each $1/K$-cube $\tau$ in $B^n(0,1)$, we write $\xi=\xi_0+K^{-1} \zeta \in \tau$, where $\xi_0$ is the center of $\tau$. Then 
$$
|E f_{\tau} (x',x_{n+1})| =K^{-n/2} |Eg (\tilde x', \tilde x_{n+1})|
$$
for some function $g$ with Fourier support in the unit cube and $\|g\|_2=\|f_{\tau}\|_2$, where the new coordinates $(\tilde x', \tilde x_{n+1})$ are related to the old coordinates $(x',x_{n+1})$ by
\begin{equation} \label{coord}
\begin{cases}
   \tilde x' =K^{-1} x' + 2 x_{n+1} K^{-1} \xi_0\,, \\
   \tilde x_{n+1} = K^{-2} x_{n+1} \,.
\end{cases}
\end{equation}
For simplicity, denote the above relation by $(\tilde x',\tilde x_{n+1})=F(x',x_{n+1})$. Denote $R_1\leq R$ the new scale. Note that the preimage of $R_1$-cubes under the map $F$ are boxes $\square$ of dimensions $R_1 K \times ... R_1 K \times R_1 K^2$ pointing to the direction given by $(-2\xi_0,1)$. A natural choice of $R_1$ is such that $R_1K^2 = R$. We tile $B^{n+1}(0,R)$ by these boxes $\square_{\tau,D}$. By a smooth partition of unity, we can write $f_\tau = \sum_{D} f_{\square_{\tau,D}}$, where $Ef_{\square_{\tau,D}}$ is essentially supported on $\square_{\tau,D}$, with a Schwartz tail. For reading convenience, we neglect the Schwartz tail in the sequel.

Now we have $f=\sum_{\tau}f_\tau=\sum_{\tau}\sum_{D} f_{\square_{\tau,D}}$. To ease the notations, we simply write $f = \sum_{\square} f_\square$. Since each $K^2$-cube $B$ lies in exactly one box $\square_{\tau,D}$, the decoupling inequality \eqref{eq:lem:dec_variety} implies that 
\begin{equation}\label{eq:narrow_dec}
    \|Ef\|_{L^p(B)} \lesssim_\varepsilon K^{\varepsilon^4} \left( \sum_{\square} \|Ef_\square\|_{L^p(10B)}^2\right)^{1/2},
\end{equation}
where the tail term $R^{-1000n}\|f\|_{2}$ is neglected.

Recall that $K = R^{\delta}$ and $R_1 = R/K^2$ is the new scale. Set $K_1  = R_1^\delta$. The preimages of $K_1^2$-cubes $\Tilde{B}_k$ under the parabolic rescaling $F$ are tubes $S$ of dimensions $K_1^2K \times .. \times K_1^2 K \times K_1^2K^2$ pointing to the same direction as $\square$. Therefore, we tile $\square$ by these tubes $S$. Figure \ref{fig:parabolic_rescaling} below shows various important objects in the physical space under the parabolic rescaling.

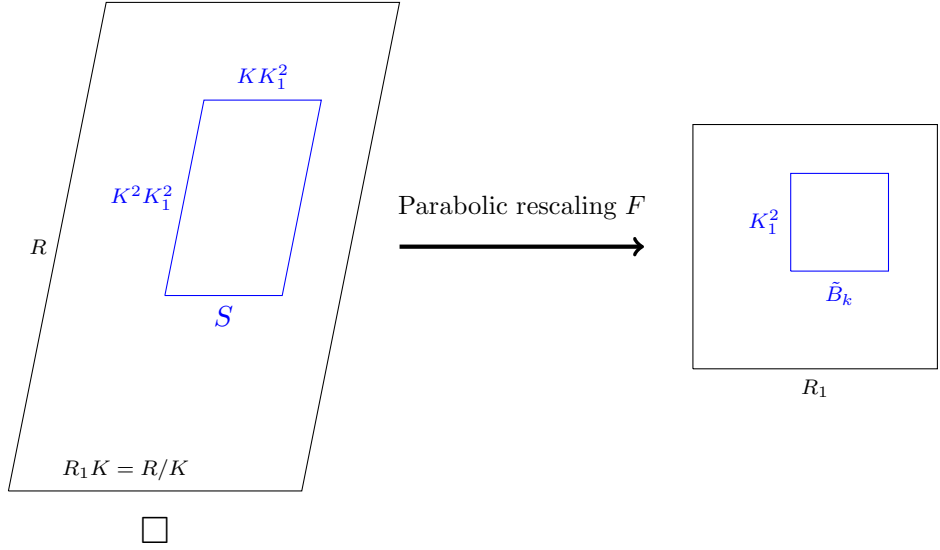
\begin{figure}[ht] 
    \centering
    \begin{tikzpicture}[scale = 1.3]
        \draw (0,0) -- (3,0) node[pos=0.4, above] {\tiny $R_1K = R/K$}--(3,0) -- (4,5) -- (1,5) -- (0,0) node[pos=0.5, left] {\tiny $R$};
        \draw[blue] (1.6,2) -- (2.8,2) node[pos=0.5, below] {\small $S$} to (3.2,4) -- (2,4)node[pos=0.5, above] {\tiny $K K_1^2$}   -- (1.6,2) node[pos=0.5, left] {\tiny $K^2K_1^2$};
        \draw[ultra thick, ->] (4,2.5) to (6.5,2.5);
        \node at (1.5,-0.4) {\large $\square$};
        
        \node at (5.25,2.9) {\footnotesize Parabolic rescaling $F$};

        \draw (7,1.25) -- (9.5,1.25) node[pos=.5,below] {\tiny $R_1$} to (9.5,3.75) to (7,3.75) -- (7,1.25);
        \draw[blue] (8,2.25) -- (9,2.25)node[pos=0.5, below] {\tiny $\tilde{B}_k$} -- (9,3.25) -- (8,3.25)  -- (8,2.25) node[pos=0.5, left] {\tiny $K_1^2$};
    \end{tikzpicture}
    \caption{important objects in the physical space under the parabolic rescaling}
    \label{fig:parabolic_rescaling}
\end{figure}

Moreover, wavepackets $T$ of dimensions $R^{1/2+\delta}\times ... \times R^{1/2+\delta} \times R $ are mapped to wavepackets $\Tilde{T}$ at scale $R_1$ of dimensions $R_1^{1/2+\delta+O(\delta^2)}\times ... \times R_1^{1/2+\delta+O(\delta^2)} \times R_1 $ under $F$. It is straightforward to check that the tangent-to-variety condition is preserved under parabolic rescaling: $T$ is $ER^{-1/2}$-tangent to $\Z(P_1,\cdots,P_{n+1-m})$ implies that the rescaled tube $\Tilde{T}$ is ${ER_1^{-1/2}}$-tangent to $Z(Q_1,\cdots,Q_{n+1-m})$, where $Q_j = P_j \circ F^{-1}$. For example, see the proof of Theorem 7.1 in \cite{DGL17}.

In what follows, we will perform dyadic pigeonholing so that each rescaled $f_\square$, which we denote by $\Tilde{f}_\square$, satisfies the assumptions in the Proposition \ref{prop:induct} at scale $R_1$ with dyadic parameters $\gamma_1$, $M_1$ uniformly in $\square$. To sum over all $B \in Y$ efficiently, we will also sort $B$ and $\square$ so that the rescaled $B$ is contained in $\sim \mu$ many $K_1^2$ cubes and 
$\|f_\square\|_2$ is dyadically a constant in $\square$.

In the following list, we first write down the desired \textit{properties} by performing the dyadic pigeonholing in \textit{italic} font, followed by how we sort narrow $B \subset Y$, $S \subset \square$ or $\square$ to achieve these \textit{goals}. 
\begin{enumerate}
    \item \textit{$\|E\Tilde{f}_\square\|_{L^p(\Tilde{B}_k)}$ is dyadically a constant.} Since $\Tilde{f}_\square$ and $\Tilde{B}_k$ are the rescaling of $f_\square$ and $S$ respectively, it suffices to have $\|Ef_\square\|_{L^p(S)}$ dyadically a constant. We sort $S \subset \square$ according to the value of $\|Ef_\square\|_{L^p(S)}$. For a dyadic number $\beta_1$, let $\mathbb{S}_{\square,\beta_1}$ be the collection of $S$ such that $\|Ef_\square\|_{L^p(S)} \sim \beta_1$.
    \item \textit{Each $S$ contains $\sim \eta$ narrow $K^2$-cubes $B$.} We further sort $S \in \mathbb{S}_{\square,\beta_1}$ according to the number of narrow $K^2$-cubes it contains. For a dyadic number $\eta$, let $\mathbb{S}_{\square,\beta_1,\eta}$ be the sub-collection of $\mathbb{S}_{\square,\beta_1}$ such that each $S$ contains $\sim \eta$ narrow $K^2$-cubes $B$. Denote $Y_{\square,\beta_1,\eta}$ the union of tubes $S$ in $\mathbb{S}_{\square,\beta_1,\eta}$.
    \item \textit{Parameters $M_1$, $\gamma_1$ and the value of $\|f_\square\|_2$ are dyadically constants in $\square$.} Since $F^{-1}$ respectively maps $K_1^2$-balls $\Tilde{B}_k$ and $r$-balls $B_r$ to $S$ and tubes $T_r$ of dimensions $Kr \times ...\times Kr \times K^2r$ pointing to the same direction as $\square$, we sort the boxes $\square$ according to the number of $S \in \mathbb{S}_{\square,\beta_1,\eta}$, the value of
    \begin{equation}\label{eq:narrow_gamma_1}
        \max_{T_r \subset \square: r \in [K_1^2,\sqrt{R_1}]} \frac{\# \{ S \in \mathbb{S}_{\square,\beta_1,\eta} : S \subset T_r\}}{r^n}
    \end{equation}
    and $\|f_\square\|_2$. For dyadic numbers $M_1, \gamma_1, \beta_2$, let $\mathbb{B}_{\beta_1,\eta,M_1, \gamma_1, \beta_2}$ be the collection of boxes $\square$ such that 
    $$
    \# \mathbb{S}_{\square,\beta_1,\eta} \sim M_1,\quad \text{the quantity in } \eqref{eq:narrow_gamma_1} \sim \gamma_1, \quad \|f_\square\|_2 \sim \beta_2.
    $$
    After rescaling, for each $\square \in \mathbb{B}_{\beta_1,\eta,M_1, \gamma_1, \beta_2}$, $\Tilde{f}_\square$ and $K_1^2$-cubes in $F(Y_{\square,\beta_1,\eta})$ satisfy the assumptions in Proposition \ref{prop:induct} at scale $R_1$ with parameters $\gamma_1,M_1$.
    \item \textit{Each narrow $K^2$-cube $B$ is contained in $Y_{\square, \beta_1,\eta}$ for $\sim \mu$ many $\square \in \mathbb{B}_{\beta_1,\eta,M_1, \gamma_1, \beta_2}$.} For a dyadic number $\mu$, let $Y'_{\beta_1,\eta,M_1, \gamma_1, \beta_2, \mu}$ be the union of narrow $K^2$-cubes $B$ such that 
    \begin{equation}\label{eq:narrow_mu}
        \#\{\square \in \mathbb{B}_{\beta_1,\eta,M_1, \gamma_1, \beta_2} : B \subset Y_{\square, \beta_1,\eta}\} \sim \mu.
    \end{equation}
\end{enumerate}
    By construction, $\sqcup_{\beta_1,\eta} Y_{\square, \beta_1,\eta} = \square$.  Therefore, on $10B$, we have
    $$
    Ef_\square =  \sum_{\beta_1,\eta} Ef_{\square} \cdot \chi_{Y_{\square, \beta_1,\eta}} 1_{10B \subset Y_{\square, \beta_1,\eta }},
    $$
    where $\chi_{Y_{\square, \beta_1,\eta}}$ is the characteristic function on $Y_{\square, \beta_1,\eta}$ and $1_{10B \subset Y_{\square, \beta_1,\eta }}$ is the indicator function on the condition $10B \subset Y_{\square, \beta_1,\eta} $ . Here, we use the fact that the dimensions of $B$ is much smaller than that of $S$.

    By the triangle inequality and construction that $\mathbb{B}_{\beta_1,\eta,M_1, \gamma_1, \beta_2}$ are disjoint collections of $\square$, \eqref{eq:narrow_dec} implies that
    \begin{equation}\label{eq:narrow_dya1}
        \|Ef\|_{L^p(B)} \lesssim_\varepsilon K^{\varepsilon^4} \sum_{\beta_1,\eta,M_1, \gamma_1, \beta_2} \left( \sum_{\substack{\square \in \mathbb{B}_{\beta_1,\eta,M_1, \gamma_1, \beta_2}  \\10B \subset Y_{\square, \beta_1,\eta }}} \|Ef_\square\|_{L^p(10B)}^2\right)^{1/2},
    \end{equation}
    for each narrow $K^2$-cubes $B$.

    The total contributions from the sets where $\beta_1\leq R^{-C}\|f\|_2$ or $\beta_2\leq R^{-C}\|f\|_2$ are negligible for some large enough $C$. Therefore, we can bound the interesting dyadic parameters by
    $$
    R^{-C}\|f\|_2 \leq \beta_1, \beta_2 \leq R^C \|f\|_2 , \quad 1\leq M_1,\mu,\eta \leq R^{O(1)}, \quad K^{-2n}\leq \gamma_1 \leq R^{O(1)}.
    $$

    There are $O(\log R)$ many dyadic numbers in these range. For each narrow $K^2$-cubes $B$, we choose parameters $\beta_1(B),\eta(B), M_1(B) ,\gamma_1(B) ,\beta_2(B)$ that maximize the term inside the parentheses in \eqref{eq:narrow_dya1}. We further pick the most popular parameters $\beta_1,\eta,M_1,\gamma_1,\beta_2$ among all narrow $K^2$-cubes $B$ and $\mu$ with the largest $|Y'_{\beta_1,\eta,M_1, \gamma_1, \beta_2, \mu}|$. With these parameters, we have \textit{properties (1) - (4)} and the decoupling inequality
    \begin{equation}\label{eq:narrow_dya2}
        \|Ef\|_{L^p(B)} \lesssim_\varepsilon K^{\varepsilon^4} (\log R)^5 \left( \sum_{\substack{\square \in \mathbb{B}_{\beta_1,\eta,M_1, \gamma_1, \beta_2}  \\10B \subset Y_{\square, \beta_1,\eta }}} \|Ef_\square\|_{L^p(10B)}^2\right)^{1/2},
    \end{equation}
    for $\gtrsim (\log R)^{-6} M$ many narrow $K^2$-cubes $B \subset Y'_{\beta_1,\eta,M_1, \gamma_1, \beta_2, \mu}$. 
    
    We are done with the dyadic pigeonholing argument and fix parameters $\beta_1,\eta,M_1,\gamma_1,\beta_2, \mu$ for the rest of the proof. Abbreviate $\mathbb{B}_{\beta_1,\eta,M_1, \gamma_1, \beta_2}$ , $Y_{\square, \beta_1,\eta }$ and $Y'_{\beta_1,\eta,M_1, \gamma_1, \beta_2, \mu}$ by $\mathbb{B}$, $Y_\square$ and $Y'$ respectively. 
    
    We are now ready to apply parabolic rescaling $F$ and the induction hypothesis. First, 
    \begin{equation}\label{eq:narrow_ParaRes}
        \frac{\|Ef_{\square}\|_{L^p(Y_\square)}}{\|f_\square\|_2} = K^{\frac{n+2}{p} - \frac{n}{2}}\frac{\|E\Tilde{f}_\square\|_{L^p(F(Y_\square))}}{\|\tilde{f}_\square\|_2}.
    \end{equation}
    Recall that the tangent-to-variety condition is preserved under parabolic rescaling. Together with \textit{properties (1) and (3)},  $\Tilde{f}_\square$ and  $F(Y_\square)$ satisfy the assumptions in Proposition \ref{prop:induct} at scale $R_1$ with parameters $\gamma_1 ,M_1$. By induction on radius $R$, we obtain
    \begin{equation}\label{eq:narrow_indu}
        \|E\Tilde{f}_\square\|_{L^p(F(Y_\square))} \lesssim_\varepsilon E^C R_1^\varepsilon \left ( \frac{\gamma_1}{M_1} \right)^{\frac{1}{p_m} - \frac{1}{p}} R_1^{\frac{n+m}{2}(\frac{1}{p_m}-\frac{1}{2}) + \frac{m}{2(m+1)}} \|\Tilde{f}_\square\|_2.
    \end{equation}
    
    We are ready to estimate $\|Ef\|_{L^p(Y)}$:
    \begin{align*}
        \|Ef\|_{L^p(Y)} &\lesssim (\log R)^{O(1)} \left (\sum_{B \subset Y'}\|Ef\|_{L^p(B)}^p \right)^{1/p}\notag\\
        &\lesssim K^{\varepsilon^4} (\log R)^{O(1)} \mu^{\frac{1}{2}-\frac{1}{p}} \left (\sum_{ \square \in \mathbb{B} : 10B \subset Y_\square}\|Ef_\square\|_{L^p(10B)}^p \right)^{1/p} \notag\\
        &\lesssim K^{\varepsilon^4} (\log R)^{O(1)} \mu^{\frac{1}{2}-\frac{1}{p}}K^{\frac{n+2}{p}-\frac{n}{2}} E^C R_1^{\varepsilon} \left( \frac{\gamma_1}{M_1} \right)^{\frac{1}{p_m}- \frac{1}{p}} R_1^{\frac{n+m}{2}(\frac{1}{p_m}-\frac{1}{2}) + \frac{m}{2(m+1)}} \left ( \sum_{\square \in \mathbb{B}}\|f_\square\|_2^p\right)^{1/p} \notag\\
        &\lesssim K^{\frac{n+2}{p}-\frac{n}{2}+ 2\varepsilon^4} E^C \left(\frac{\mu}{\# \mathbb B}\right)^{\frac{1}{2}-\frac{1}{p}} \left( \frac{\gamma_1}{M_1} \right)^{\frac{1}{p_m}- \frac{1}{p}} \left ( \frac{R}{K^2}\right)^{\frac{n+m}{2}(\frac{1}{p_m}-\frac{1}{2}) + \frac{m}{2(m+1)}+\varepsilon} \|f\|_2^2.
    \end{align*}
    The first inequality follows from the assumption that $\|Ef\|_{L^p(B_k)}$ is essentially a constant and that $Y'$ contains $\gtrsim (\log R)^{-6} M$ many narrow $K^2$-cubes $B_k$. The second inequality follows from \textit{property (4)}, \eqref{eq:narrow_dya2} and H\"older's inequality. The third inequality follows from parabolic rescaling \eqref{eq:narrow_ParaRes} and the induction hypothesis \eqref{eq:narrow_indu}. The last inequality follows from $(\log R)^{O(1)}\lesssim_\varepsilon K^{\varepsilon^4}$ and \textit{property (3)} that $\|f_\square\|_2$ is dyadically a constant.

    We claim that 
    \begin{equation}\label{eq:narrow_claim}
        \frac{\mu}{\# \mathbb B} \lesssim \min \left \{ 1, K^{\varepsilon^{4}} \frac{M_1 \gamma K^{n+1}}{ M \gamma_1} \right\}.
    \end{equation}

    Assume the claim for now. Since $1/2 \ge 1/p_m$, we estimate
    \begin{align*}
        \left(\frac{\mu}{\# \mathbb B}\right)^{\frac{1}{2}-\frac{1}{p}} \left( \frac{\gamma_1}{M_1} \right)^{\frac{1}{p_m}- \frac{1}{p}}  
        &\lesssim K^{O(\varepsilon^{4})} \left (\frac{M_1 \gamma K^{n+1}}{ M \gamma_1} \right)^{\frac{1}{p_m}-\frac{1}{p}}\left( \frac{\gamma_1}{M_1} \right)^{\frac{1}{p_m}- \frac{1}{p}} \\
        &= K^{O(\varepsilon^{4}) + (n+1) \left(\frac{1}{p_m}- \frac{1}{p}\right)}\left( \frac{\gamma}{M} \right)^{\frac{1}{p_m}- \frac{1}{p}}.
    \end{align*}
    
    We now check the power of $K$. Recall that $p=\frac{2m}{m=1}$ and $p_m=\left( \frac{1}{2} - \frac{1}{m^3-m} \right)^{-1}$. A direct computation shows that
    $$
    \frac{n+2}{p} - \frac{n}{2} +(n+1) \left(\frac{1}{p_m}- \frac{1}{p}\right) -2\left[\left (\frac{n+m}{2}\right)\left(\frac{1}{p_m}-\frac{1}{2}\right) + \frac{m}{2(m+1)}\right] = 0.
    $$
    Therefore, the induction for the narrow case closes as long as $\frac{K^{O(\varepsilon^4)}}{K^{2\varepsilon}} \ll 1$, which can be achieved by taking $K=R^\delta$ sufficiently large compared to any constant depending on $\varepsilon$.

    It remains to prove the claim \eqref{eq:narrow_claim}.

    Recall from the definition of $\mu$ in \eqref{eq:narrow_mu} that for each $B\subset Y'$,
    $$
    \#\{\square \in \mathbb{B} : B \subset Y_{\square}\} \sim \mu.
    $$
    The first upper bound $ \frac{\mu}{\# \mathbb B} \leq 1$ is obvious. To see the second bound, we first consider the cardinality of the set $\{(\square,B): \square \in \mathbb B , B \subset Y_\square \cap Y'\}$ in two ways. On one hand, there are at least $(\log R)^{-6} M $ many $K^2$-cubes $B$ in $Y'$, each of which is contained in about $\mu$ many $Y_\square$. We obtain a lower bound
    $$
    \# \{(\square,B): \square \in \mathbb B , B \subset Y_\square \cap Y'\} \gtrsim (\log R)^{-6}M\mu.
    $$
    On the other hand, for each $\square \in \mathbb B$, $Y_\square$ contains about $M_1$ many tubes $S$ by \textit{property (3)} and each $S$ contains about $ \eta$ many narrow $K^2$-cubes $B$ by \textit{property (2)}. Since not all the narrow $K^2$-cubes are in $Y'$, we have only an upper bound
    $$
    \# \{(\square,B): \square \in \mathbb B , B \subset Y_\square \cap Y'\} \lesssim (\# \mathbb B) M_1 \eta.
    $$
    Combining the estimates, we have
    \begin{equation}\label{eq:narrow.geo1}
        \frac{\mu}{\# \mathbb B} \lesssim \frac{(\log R)^6 M_1 \eta}{M}.
    \end{equation}

    Let $T_r \subset \square$ be the maximizer in \eqref{eq:narrow_gamma_1}, for some $r \in [K_1^2,\sqrt{R_1}]$. We count the cardinality of $\{ B \subset Y: B \subset T_r\}$ in two ways. On one hand, $T_r$ contains about $\gamma_1 r^{n}$ many $S$ by \textit{property (3)} and each $S$ contains about $ \eta$ many narrow $K^2$-cubes $B$ by \textit{property (2)}. Since not all $K^2$-cubes $B\subset Y$ are narrow, we have only a lower bound
    $$
    \# \{ B \subset Y: B \subset T_r\}\gtrsim \gamma_1 r^n \eta.
    $$

    On the other hand, $T_r$, a tube of dimensions $Kr \times...\times Kr \times K^2 r $ can be partitioned into $K$ many $Kr$-balls. Since $Kr \in [K^2 , \sqrt{R}]$, by assumption, each $Kr$-balls contains at most $\gamma (Kr)^n$ many $K^2$-cubes $B$ in $Y$. Thus, we obtain an upper bound
    $$
    \# \{ B \subset Y: B \subset T_r\}\lesssim K\gamma (Kr)^n .
    $$
    Combining the estimates, we have
    \begin{equation}\label{eq:narrow.geo2}
        \eta \lesssim \frac{\gamma K^{n+1}}{\gamma_1}.
    \end{equation}

    The second bound in \eqref{eq:narrow_claim} follows from \eqref{eq:narrow.geo1}, \eqref{eq:narrow.geo2} and the estimate $(\log R)^6 \lesssim_\varepsilon K^{\varepsilon^4}$. We have proved the claim, and hence the narrow case.
\end{proof}

\bibliography{myreference}

\vspace{0.25cm}
	
\noindent Xiumin Du, Northwestern University, \textit{xdu@northwestern.edu}\\

\noindent Jianhui Li, Northwestern University, \textit{jianhui.li@northwestern.edu}\\

\noindent Hong Wang, NYU Courant, \textit{hw3639@nyu.edu} \\

\noindent Ruixiang Zhang, UC Berkeley, \textit{ruixiang@berkeley.edu}

\end{document}